\date{\today}
\newcommand{\Q}{\mathbb{Q}}
\newcommand{\N}{\mathbb{N}}
\newcommand{\R}{\mathbb{R}}
\newcommand{\Z}{\mathbb{Z}}
\newcommand{\E}{\mathbb{E}}
\newcommand{\set}[1]{\left\{#1\right\}}
\newcommand{\geo}{\mathop{\mathtt{GEO}}\nolimits}
\renewcommand{\P}{\mathbb{P}}
\newcommand{\ket}{\right\rangle}
\newcommand{\bra}{\left\langle}
\newcommand{\dd}{{\rm{d}}}
\newtheorem{theorem}{Theorem}
\newtheorem*{theorem*}{Theorem}
\newtheorem{lemma}{Lemma}
\newtheorem*{proposition*}{Proposition}
\theoremstyle{definition}
\newtheorem{remark}{Remark}
\newtheorem{definition}{Definition}
\numberwithin{equation}{section}
\begin{document}

\title[Jump size in the directed first passage percolation]
{Note on the maximal jump size in a continuum model of directed first passage percolation}
\author{Ryoki Fukushima}
\address{Research Institute for Mathematical Sciences}
\email{ryoki@kurims.kyoto-u.ac.jp}


\begin{abstract}
In this note, we consider the directed first passage percolation introduced in~[F.~Comets, R.~Fukushima, S.~Nakajima and N.~Yoshida: Journal of Statistical Physics, 161-(3), 577--597 (2015)]. It is proved that the shortest path from the origin to the $n$-th hyperplane makes a jump larger than a positive power of $\log n$. Some numerical results are also provided, which indicates that the maximal jump size is much larger in a certain parameter region. 
\end{abstract}

\maketitle

\section{Introduction and result}
We study a geometric property of the the shortest path (geodesic) in a continuum model of directed first passage percolation introduced in~\cite{CFNY15}. Specifically, we are interested in the maximal jump size in the geodesic in this model. The result in this note implies that it grows unbounded as the system size goes to infinity. In the end of this article, some numerical results are also presented. 

We start by recalling the first passage percolation model in~\cite{CFNY15}. Let $(\omega, \P)$ be the Poisson point process on $\N\times \R^d$ whose intensity is the product of the counting measure and the Lebesgue measure. 
It is natural to realize it as a sum of the Dirac measures but with some abuse of notation, we will often identify $\omega$ with its support.
For a given sample $\omega$, we call $\gamma=(\gamma(k))_{k=1}^n\in (\R^d)^n$ an open path (with length $n$) if $\{(k,\gamma(k))\}_{k=1}^n$ is a subset of $\omega$ and we define its passage times by 
\begin{equation*}
T_\gamma(\omega)=\sum_{k=1}^n|\gamma(k-1)-\gamma(k)|^\alpha.
\end{equation*}
The passage time from $0$ to the $n$-th section $\{n\}\times \R^d$ is defined by
\begin{equation} 
\label{passtime}
T_n(\omega)
=\inf\set{T_\gamma(\omega)\colon \gamma(0)=0 \textrm{ and }
\gamma\textrm{ is open with length }n} 
\end{equation} 
and $T_{v,w}(\omega)$ for space-time points $v,w$ is defined similarly. This model can be regarded as a directed version of the Euclidean first passage percolation introduced and studied in~\cite{HN97,HN99,HN01}. 

As in other first passage percolation models (see, for example, the monograph~\cite{ADH17}), the following two problems are of interest: (i) asymptotic behaviors of $T_n$ and (ii) properties of the minimizing path. For the first problem (i), a direct application of the subadditive ergodic theorem shows that the so-called \emph{time constant}
\begin{equation} 
\mu=\lim_{n\to\infty}{1 \over n}T_n(\omega)
\label{mu}
\end{equation} 
exists $\P$-almost surely. This limit is deterministic and shown to be positive in~\cite{CFNY15}. The rate of convergence is a more interesting but challenging question. The works~\cite{CFNY15,Nak18} contain some results in this direction but they do not seem to be optimal. 
The problem (ii) concerns the geodesic, denoted by $\geo_{n;\omega}$, which is defined by the open path $\gamma$ with length $n$ for which $T_\gamma(\omega)=T_n(\omega)$. A point-to-point geodesic $\geo_{v,w;\omega}$ is defined similarly. Note that these geodesics are almost surely uniquely determined in our continuum setting if $\alpha\neq 1$. In a recent work~\cite[Corollary~2.7]{Nak18}, it is proved that for any $\alpha > 1$, the maximal jump size in $\geo_{n;\omega}$ is of order  $n^{o(1)}$ as $n\to \infty$. On the other hand, a numerical experiment presented in Section~\ref{sec:numerics} indicates that the maximal jump size is much larger when $\alpha$ is small. It is an interesting problem to determine the asymptotics of the maximal jump size. The following result is a modest step toward this direction, which in particular shows that the maximal jump size is unbounded even for $\alpha>1$.  
\begin{theorem}
\label{thm:main}
For any $d\in\N$ and $\alpha\neq 1$, there exists $\epsilon>0$ such that
$\P$-almost surely, the maximal jump of $\geo_{n;\omega}$ is larger than 
$(\log n)^\epsilon$ for sufficiently large $n$. 
\end{theorem}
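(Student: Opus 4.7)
The plan is to reduce the claim to a comparison between the unrestricted passage time $T_n$ and its $R$-restricted version
\[
T_n^{(R)}(\omega)=\inf\set{T_\gamma(\omega):\gamma(0)=0,\ \gamma\text{ open of length }n,\ |\gamma(k)-\gamma(k-1)|\le R\text{ for all }k}.
\]
Since the maximal jump of $\geo_{n;\omega}$ exceeds $R$ if and only if $T_n^{(R)}>T_n$, it suffices to show that $\P(T_n^{(R)}=T_n)$ is summable in $n$ for $R=(\log n)^\epsilon$ with a small $\epsilon>0$, and then invoke Borel--Cantelli.

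The central estimate I would aim for is a quantitative gap of the associated time constants,
\[
\mu_R-\mu\ \ge\ c\,e^{-CR^d}\quad\text{for all sufficiently large }R,
\]
where $\mu_R=\lim_n n^{-1}\E[T_n^{(R)}]$, suitably defined to handle the rare event that restricted paths fail to exist. The heuristic is that with probability $\sim e^{-CR^d}$ a short window of the environment is so sparse that any $R$-restricted path must incur an extra cost there, and by subadditivity these excess costs accumulate linearly in $n$. A concrete realization is a \emph{trap}: near some site there is a unique nearby Poisson point $(k,y^*)$ whose forward $R$-neighborhood is empty, $\omega\cap(\{k+1\}\times B(y^*,R))=\emptyset$; an $R$-restricted path through $(k,y^*)$ is then blocked, while the unrestricted geodesic can simply cross by a single long jump.

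Given such a gap, one combines it with a Kesten-type concentration inequality for both $T_n$ and $T_n^{(R)}$, of the form $\P(|T_n-\E T_n|>t)\le 2\exp(-c\min(t^2/n,t))$. Taking $t$ a fixed fraction of $(\mu_R-\mu)n$ and using that for $R=(\log n)^\epsilon$ with $\epsilon<1/d$ one has $(\mu_R-\mu)n\ge c\,n\exp(-C(\log n)^{\epsilon d})=n^{1-o(1)}$, the concentration bound yields $\P(T_n^{(R)}=T_n)\le\exp(-n^{1-o(1)})$, which is comfortably summable, and the theorem follows for any $\epsilon\in(0,1/d)$.

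The main obstacle is establishing the gap $\mu_R-\mu\ge c\,e^{-CR^d}$. The subtlety is that $\geo_{n;\omega}$ depends on the entire environment and actively tries to avoid traps, so the mere positive probability of a local trap does not automatically contribute to the gap. A plausible route is to prove a finite-window inequality $\E[T_{n_0}^{(R)}]-\E[T_{n_0}]\ge c_0\,e^{-CR^d}$ for some fixed $n_0$ by designing an environment configuration in which every $R$-restricted detour is intrinsically more costly than the unrestricted long jump, and then to extend to all $n$ via a subadditive ergodic argument. The hypothesis $\alpha\neq 1$ is expected to enter precisely at this step, since it controls whether spreading a mandatory lateral displacement over many time slices is cheaper or more expensive than a single long jump, and hence whether the unrestricted geodesic genuinely prefers the long jump to a smooth detour.
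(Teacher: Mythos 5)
Your reduction to the event $\{T_n^{(R)}=T_n\}$ is fine, but the proposal has a genuine gap at exactly the point you flag as the ``main obstacle'': the quantitative time-constant inequality $\mu_R-\mu\ge c\,e^{-CR^d}$ is not established, and the route you sketch for it does not work as stated. A local ``trap'' (a Poisson point with empty forward $R$-ball) imposes no cost on the restricted geodesic, because that geodesic simply never visits the trap; and for $\alpha>1$ the unrestricted geodesic does not want to make one long jump either, so an empty ball is routed around by both paths at essentially equal cost. Turning ``traps occur with density $e^{-CR^d}$'' into a linear-in-$n$ excess for \emph{every} restricted path is precisely the content of a van den Berg--Kesten resampling argument, and even in the lattice case that argument yields a qualitative strict inequality, not the explicit rate $e^{-CR^d}$ your scheme needs. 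Your fallback (a finite-window inequality $\E T_{n_0}^{(R)}-\E T_{n_0}\ge c_0e^{-CR^d}$ plus subadditivity) also fails in the direction you need it: subadditivity gives $\mu_R\le n_0^{-1}\E T_{n_0}^{(R)}$, i.e.\ an \emph{upper} bound on $\mu_R$, so a gap at a fixed scale $n_0$ does not transfer to a gap between the limits. A second, independent gap is the concentration input: the sub-Gaussian bound $\P(|T_n-\E T_n|>t)\le 2\exp(-c\min(t^2/n,t))$ is not available for this model; what is known is the one-sided bound~\eqref{concentration3} at scale $N^{1-\lambda}$ with $\lambda<1/2$, and one would additionally have to control the nonrandom error $\E T_n-n\mu$ from above to compare $T_n$ with $n\mu$.

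The paper avoids both difficulties by never passing through time constants. It works directly at scale $n$ with a resampling (multi-valued map) argument in the spirit of van den Berg--Kesten combined with an entropy count from Duminil-Copin et al.: conditionally on the geodesic having all jumps at most $(\log n)^\epsilon$, one resamples the environment in $n^\theta$ tubes of length $(\log n)^\beta$ chosen along the geodesic among $\asymp n(\log n)^{-\beta}$ ``black'' blocks (Lemmas~\ref{black} and~\ref{3}), replacing each by a \emph{tunneling} configuration --- a line of tightly spaced points with a single lateral offset of order $(\log n)^\epsilon$ in the middle --- which strictly improves the passage time and forces a large jump. The probabilistic cost of all tunnels is $\exp\{-cn^\theta(\log n)^{(2d+1)\beta}\}$ (Lemmas~\ref{lem-tunnel} and~\ref{4}), while the number of admissible choices of tubes is at least $\exp\{(1-2\theta)n^\theta\log n\}$ and the resampled configurations for distinct choices are shown to be disjoint (Lemma~\ref{5}); for $\beta<(2d+1)^{-1}$ the entropy wins and yields $\P(\mathcal{E})\le\exp\{-n^\theta\}$. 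If you want to salvage your approach, you would essentially have to import this entire mechanism to prove the gap $\mu_R-\mu>0$ quantitatively, at which point the detour through time constants and concentration buys nothing.
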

\begin{remark}
This result is analogous to the one in~\cite{BK93} for the classical lattice first passage percolation, which states (as a special case) that the geodesic goes through an edge with an arbitrarily large passage time as soon as the distribution of the edge weight is unbounded. In this lattice case, a more detailed result has recently been obtained in~\cite{Nak19+}. 
\end{remark}
\begin{remark}
The case $\alpha=1$ is somewhat singular and not covered in this note. For instance, the geodesic is not necessarily unique in $d=1$. 
\end{remark}

\noindent\textbf{Notational convention}\\
For a time-space point $w\in \N\times \R^d$, we write its time coordinate as $w_{\rm t}$ and space coordinate as $w_{\rm s}$. 
\section{Proof of Theorem~\ref{thm:main}}
The basic strategy is similar to the work~\cite{BK93},
with the help of an idea from~\cite{DCKNPS}.  
Fix four positive constants $c_1>0$, $0<\epsilon<\beta$ and $M>0$ whose values will be determined later. 
For $(k,x)\in\N\times4\Z^d$, let us define a \emph{face} by
\begin{equation*}
 F_n(k,x)=\{k(\log n)^\beta \}\times 
\left(x+[-2,2)^d\right). 
\end{equation*}
\begin{definition}
\label{def:black}
A \emph{face} $F_n(k,x)$ is \emph{black} if the following holds for all $v\in F_n(k,x)$:
\begin{enumerate}
 \item[\textbf{(i)}] when $\alpha<1$, 
\begin{equation}
\inf_{w\in\{(k+1)(\log n)^\beta\}\times\Z^d} T_{v,w}(\omega) \ge c_1 (\log n)^\beta,
\label{eq:black<1}
\end{equation}
 \item[\textbf{(ii)}] when $\alpha>1$, for all $w\in \{(k+1)(\log n)^\beta\}\times\Z^d$ satisfying $|x-w_{\rm s}|\le 2M(\log n)^\beta$, 
\begin{equation}
T_{v,w}(\omega) \ge 
\left(\left(\frac{|v_{\rm s}-w_{\rm s}|}{(\log n)^{\beta}}\right)^\alpha+c_1\right)
(\log n)^\beta.
\label{eq:black>1}
\end{equation}
\end{enumerate}
\end{definition}
\begin{lemma}
\label{black}
Let $\alpha\neq 1$ and $\beta>0$. Then for all sufficiently small $c_1>0$, 
\begin{equation*}
\P(F_n(k,x)\textrm{ is black})\ge 1-\exp\{-(\log n)^{c_1}\}.
\end{equation*} 
\end{lemma}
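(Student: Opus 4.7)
My plan is to apply a Chernoff-type estimate to the partition function $\sum_\gamma e^{-\lambda T_\gamma(\omega)}$ and use Campbell's formula for the Poisson process to compute the annealed moment explicitly. Writing $N := (\log n)^\beta$, the core step is a pointwise bound: for a fixed starting point $v$ (and, in case~(ii), also a fixed endpoint $w$) the bad event has probability at most $\exp(-c_0 N)$ for some $c_0 = c_0(c_1) > 0$, provided $c_1$ is small enough. The remaining work is (a) a union bound over the at most $O(N^d)$ lattice endpoints $w$ in case~(ii), and (b) a discretization of the starting point $v$ over the cube $x + [-2,2)^d$. For (b), the map $v \mapsto T_{v,w}(\omega)$ is $\alpha$-H\"older with constant $1$ when $\alpha < 1$ (from $|a^\alpha - b^\alpha| \le |a-b|^\alpha$) and Lipschitz with a polynomial-in-$N$ constant when $\alpha > 1$, so an $O(1)$-spaced grid suffices. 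Both overheads are polynomial in $N$, hence absorbed into $\exp(-c_0 N/2)$, and since $N = (\log n)^\beta$, this beats $\exp\{-(\log n)^{c_1}\}$ as soon as $c_1 \le \beta$.

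For the pointwise step in case~(i), fix $v$ and set
\begin{equation*}
Z_v := \sum_{w \in \Z^d}\ \sum_{v_1,\ldots,v_{N-1} \in \omega}\, \exp\left(-\lambda \sum_{k=1}^N |v_k - v_{k-1}|^\alpha\right), \qquad v_0 = v_{\rm s},\ v_N = w.
\end{equation*}
On $\{\inf_w T_{v,w} \le c_1 N\}$ some summand in $Z_v$ is at least $e^{-\lambda c_1 N}$, so Markov yields $\P(\inf_w T_{v,w} \le c_1 N) \le e^{\lambda c_1 N}\, \E[Z_v]$. Campbell's formula turns the Poisson sums into Lebesgue integrals, giving
\begin{equation*}
\E[Z_v] = \sum_{w \in \Z^d} J(\lambda)^N\, q^{*N}(w - v_{\rm s}), \qquad J(\lambda) := \int_{\R^d} e^{-\lambda |u|^\alpha}\, du = \lambda^{-d/\alpha} J(1),
\end{equation*}
where $q := J(\lambda)^{-1} e^{-\lambda|\cdot|^\alpha}$ is a probability density and $q^{*N}$ denotes its $N$-fold self-convolution. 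For large $N$, $q^{*N}$ is a smooth density spread on a scale $\gg 1$, so by Poisson summation the lattice sum equals $1 + O(c^N)$ and $\E[Z_v] \le 2 J(\lambda)^N$. Optimizing at $\lambda = d/(\alpha c_1)$ gives the per-$N$ exponent $(d/\alpha)[1 + \log(\alpha c_1/d)] + \log J(1)$, which is strictly negative once $c_1$ is below an explicit threshold.

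For case~(ii), fix $v$ and $w$ and define $Z_{v,w}$ by the same formula but with $v_N = w_{\rm s}$ (and no outer sum). Then $\E[Z_{v,w}] = J(\lambda)^N q^{*N}(w_{\rm s} - v_{\rm s})$, and a Cram\'er/Laplace estimate gives $q^{*N}(NL) \le C N^{-d/2} \exp(-N \lambda L^\alpha)$ with $L := |v_{\rm s} - w_{\rm s}|/N$, up to corrections that are only $O(\log \lambda)$. The crucial observation is that the increment density $q$ and the Chernoff tilt share the same functional form $e^{-\lambda |\cdot|^\alpha}$, so the Cram\'er rate \emph{exactly} cancels the factor $e^{\lambda L^\alpha N}$ coming from the $L^\alpha$ term on the right-hand side of \eqref{eq:black>1}; this reduces case~(ii) to the same optimization as in case~(i), producing $\P(\text{bad for fixed } v,w) \le C \exp(-c_0 N)$ uniformly in $L \in [0, 2M]$.

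The main technical obstacle I anticipate is justifying the Cram\'er bound $q^{*N}(NL) \le C N^{-d/2} e^{-N \lambda L^\alpha}$ uniformly across $L \in [0, 2M]$: in the Gaussian window $\lambda^{1/\alpha} L = O(1)$ one must use the variance-based quadratic rate $I_q(L) \approx \lambda^{2/\alpha} L^2 / (2 \sigma_0^2)$ in place of the asymptotic $\lambda L^\alpha$, and one must check that the resulting discrepancy $\lambda L^\alpha - I_q(L)$ is only logarithmic in the large parameter $1/c_1$, hence swallowed by the dominant $(d/\alpha) \log c_1$ gain from optimizing $\lambda$.
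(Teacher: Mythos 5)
Your proposal is correct in outline, but it takes a genuinely different route from the paper. The paper does not run an annealed first-moment computation at all: for $\alpha<1$ it simply quotes the concentration inequality of Comets--Fukushima--Nakajima--Yoshida (their Proposition~3.1), which bounds $\P(T_N<N\mu-N^{1-\lambda})$, and chooses $c_1<(\mu\wedge\beta)/4$; for $\alpha>1$ it applies an affine tilt $\tilde\omega|_{\{j\}\times\R^d}=\omega|_{\{j\}\times\R^d}-v_{\rm s}-jD$ so that the geodesic from $v$ to $w$ becomes an admissible path for $T_{(\log n)^\beta}(\tilde\omega)$, and uses a Taylor expansion of $|D+\Delta_j|^\alpha$ (with the linear terms telescoping to zero) to show that the bad event forces $T_{(\log n)^\beta}(\tilde\omega)\le\mu(\log n)^\beta/2$, to which the same concentration bound applies. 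Your Chernoff bound on $\sum_\gamma e^{-\lambda T_\gamma}$ with Mecke's formula replaces both citations: it is self-contained, does not need $\mu>0$ or any quantitative concentration input, and yields explicit constants; the price is that you must prove the Cram\'er-type bound on $q^{*N}$ yourself, and the convexity/Taylor computation needed to show $I_q(L)\ge\lambda L^\alpha-O(\log\lambda)$ is essentially the same expansion as the paper's display~\eqref{eq:Taylor}, just deployed on the annealed rather than the quenched side. Two caveats you have mostly anticipated: the literal bound $q^{*N}(NL)\le CN^{-d/2}e^{-N\lambda L^\alpha}$ carries an unavoidable per-step factor $K(\lambda,L)/J(\lambda)$ with $K(\lambda,L)=\int e^{-c\lambda(L^{\alpha-2}|w|^2\wedge|w|^\alpha)}\,\dd w$, so you should phrase the conclusion as $e^{\lambda c_1N}K(\lambda,L)^{N-1}$ and check $\lambda c_1+\log K<0$ at $\lambda\asymp 1/c_1$ (it is, since $\log K\asymp-\min(d/2,d/\alpha)\log\lambda$ for $L$ bounded away from $0$); and the regime of small $L$, where the Gaussian window spoils the exact cancellation, must be split off and treated by the crude case-(i) bound with threshold $\delta^\alpha+c_1$ --- precisely the paper's separate treatment of $|D|<\delta$. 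With those two points written out, your argument goes through and also covers the continuum-to-lattice discretization in $v$ the same way the paper does (subadditivity for $\alpha<1$, an $o((\log n)^\beta)$ first-jump perturbation for $\alpha>1$).
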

\begin{proof}
We may assume $(k,x)=(0,0)$ without loss of generality and write $F_n$ for $F_n(0,0)$. 

Let us first deal with the case $\alpha<1$. Note that for $k=0$ and fixed $v$, the left hand side of~\eqref{eq:black<1} has the same law as $T_{(\log n)^\beta}(\omega)$. 
Recall the tail estimate proved in~\cite[Proposition~3.1]{CFNY15}: for any $\lambda<1/2$, there exist $C_1, C_2>0$ such that for all $N\in\N$, 
\begin{equation}
\P\left(T_N(\omega)-N\mu<-N^{1-\lambda}\right)
\le C_1\exp\left\{-C_2N^\lambda \right\}.
\label{concentration3}
\end{equation}
Choosing $0<c_1<(\mu\wedge\beta)/4$ and $N=[(\log n)^\beta]$, 
one finds that 
\begin{equation}
\P\left(T_{(\log n)^\beta}(\omega) <2c_1 (\log n)^\beta\right)
\le \exp\{-2(\log n)^{\beta}\}. 
\label{eq:v=0}
\end{equation}
While this is for a fixed $v$, the extension to all $v\in F_n$ is straightforward: 
if $\inf_w T_{v,w}(\omega) < \epsilon(\log n)^\beta$ for some $v\in F_n$, then by considering the path starting at $0$ and following the geodesic for $\inf_w T_{v,w}(\omega)$ after the first step, we find that 
\begin{equation*}
T_{(\log n)^\beta}(\omega) \le c_1(\log n)^\beta+2\sqrt{d}
\end{equation*}
by using the triangle inequality $|x+y|^\alpha\le|x|^\alpha+|y|^\alpha$ 
for $\alpha< 1$. Thus this case is covered by~\eqref{eq:v=0}. 

Next we consider the case $\alpha>1$. In this case, the shortest path 
from $v=(v_{\rm t},v_{\rm s})$ to $w=(w_{\rm t},w_{\rm s})$ is, if the open 
path constraint is dropped, the straight
line with the direction $D=(w_{\rm s}-v_{\rm s})(\log n)^{-\beta}$. Suppose that
\begin{equation}
T_{v,w}(\omega) <\left(|D|^\alpha
+c_1\right)(\log n)^\beta.
\label{white}
\end{equation}
When $|D|<\delta$ with $\delta^\alpha+2\epsilon<\mu/2$, then the probability
of this event is controlled in the same way as before. 
For the case $|D|\ge \delta$, we apply an affine tilting to the configuration $\omega$ to define 
$\tilde{\omega}$ so that $v\mapsto(0,0)$ and $w\mapsto((\log n)^\beta,0)$, 
that is, we translate each section as
$\tilde\omega|_{\{j\}\times \R^d}=\omega|_{\{j\}\times \R^d}-v_{\rm s}-jD$. 
Then, since $\{(j,\geo_{v,w; \omega}(j)-j D)\}_{j=1}^{(\log n)^\beta}\subset \tilde\omega$, 
we have
\begin{equation*}
 T_{(\log n)^\beta}(\tilde\omega)
\le \sum_{j=1}^{(\log n)^\beta}
 |\geo_{v,w; \omega}(j-1)-\geo_{v,w; \omega}(j)-D|^\alpha\\
\end{equation*} 
We show that this is smaller than
$\mu(\log n)^\beta/2$ under the assumption~\eqref{white}. 
Let $\Delta_j=\geo_{v,w; \omega}(j-1)-\geo_{v,w; \omega}(j)-D$. 
By using the Taylor expansion, we get 
\begin{equation}
\begin{split}
&T_{v,w}(\omega) \\
&\quad=\sum_{j=1}^{(\log n)^\beta}
 |\geo_{v,w; \omega}(j-1)-\geo_{v,w; \omega}(j)|^\alpha\\
&\quad\ge \sum_{j=1}^{(\log n)^\beta}
 \bigl[|D|^\alpha+\bra\nabla |D|^\alpha,\Delta_j\ket\\
&\qquad +c\left(|D|^{\alpha-2}|\Delta_j|^21_{\{|\Delta_j|\le |D|/2\}}
+ |\Delta_j|^\alpha 1_{\{|\Delta_j|> |D|/2\}}\right)\bigr].
\end{split}
\label{eq:Taylor}
\end{equation}
Note that the second term sums up to zero since $\sum_{k=1}^{(\log n)^\beta}\Delta_j=0$. Recalling $\delta\le|D|\le M$, we have 
\begin{equation*}
 \delta^{\alpha-2}\wedge M^{\alpha-2}\le |D|^{\alpha-2}\le \delta^{\alpha-2}\vee M^{\alpha-2},
\end{equation*}
and hence~\eqref{white} and~\eqref{eq:Taylor} imply 
\begin{equation*}
\begin{split}
& \sum_{j=1}^{(\log n)^\beta}
 |\Delta_j|^\alpha\\
 &\quad\le \sum_{j=1}^{(\log n)^\beta}
 \left(\delta^\alpha+
 \delta^{\alpha-2}|\Delta_j|^21_{ \{\delta\le|\Delta_j|\le |D|/2\}}
 + |\Delta_j|^\alpha 1_{\{|\Delta_j|> |D|/2\}}\right)\\
 &\quad\le c\left(\delta^\alpha+c_1\left(\frac{M}{\delta}\right)^{|\alpha-2|}+c_1\right)
(\log n)^\beta.
\end{split}
\end{equation*}
The coefficient in the last line can be made smaller than $\mu/2$ by letting 
$c_1$ small. Therefore, for $|D|\ge\delta$, we arrive at 
\begin{equation*}
\P\left(T_{v,w}(\omega) <\left(|D|^\alpha+c_1\right)(\log n)^\beta\right)
\le \P\left( T_{(\log n)^\beta}(\tilde\omega)\le \mu(\log n)^\beta/2\right).
\end{equation*}
Since $\tilde\omega$ has the same law as $\omega$, this right hand side
is again bounded by $\exp\{-(\log n)^c\}$. This bound can be extended to all lattice point $w$ satisfying $|w|\le 2M(\log n)^\beta$ by the union bound. In order to pass from lattice points to continuum, note that under~\eqref{white}, the first and last jumps of the geodesic must be smaller than 
$(M^\alpha+2c_1)^{1/\alpha}(\log n)^{\beta/\alpha}$. Then the costs to
change the first and last steps are bounded by 
$c((\log n)^{\beta/\alpha})^{\alpha-1}=o((\log n)^\beta)$ by the Taylor expansion.
\end{proof}

Let us consider the following conditions:
\begin{gather}
\max_{1\le j\le n}|\geo_{n;\omega}(j-1)-\geo_{n;\omega}(j)|
 \le (\log n)^\epsilon,\label{small-jump}\\
\#\{F_n(k,x)\colon \textrm{black and  crossed by }\geo_{n;\omega}\}
 \ge (1-\epsilon)n(\log n)^{-\beta},\label{many-black}\\
 \#\{k\colon |[\geo_{n;\omega}(k(\log n)^\beta)]-
 [\geo_{n;\omega}((k+1)(\log n)^\beta)]|\ge M(\log n)^\beta\}\label{linear}\\
\quad \le (1_{\{\alpha< 1\}}+\epsilon^2 1_{\{\alpha>1\}})n(\log n)^{-\beta}.
\notag
\end{gather}
The last condition is non-trivial only for $\alpha>1$. 
We first check that under~\eqref{small-jump}, we 
have~\eqref{many-black} and \eqref{linear} with high probability. 
\begin{lemma}
\label{3}
When $M$ is sufficiently large depending on $\epsilon>0$, 
\begin{equation*}
\begin{split}
& \P(\textrm{\eqref{small-jump} holds and either 
 \eqref{many-black} or \eqref{linear} fails})\\
&\quad \le \exp\{-n^{1\wedge\frac{d}{\alpha}}(\log n)^{-\beta}\}.
\end{split}
\end{equation*}
\end{lemma}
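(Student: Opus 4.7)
The plan is to decompose the bad event into two parts: (a) \eqref{small-jump} holds and \eqref{many-black} fails, and (b) \eqref{small-jump} and \eqref{many-black} hold but \eqref{linear} fails. Part (b) is relevant only when $\alpha>1$, since \eqref{linear} is automatic otherwise.

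For (a), I would run a union bound over sequences of faces visited at time slices. Setting $v_k=\geo_{n;\omega}(k(\log n)^\beta)$ and letting $x_k\in 4\Z^d$ be the unique lattice point with $(k(\log n)^\beta,v_k)\in F_n(k,x_k)$, condition \eqref{small-jump} forces $|x_{k+1}-x_k|\le 2(\log n)^{\beta+\epsilon}$ for large $n$ by the triangle inequality. Starting from $x_0=0$, there are at most $C(\log n)^{d(\beta+\epsilon)}$ choices per step, so with $K=n(\log n)^{-\beta}$ the total number of admissible sequences is $\exp(O(K\log\log n))$. For each fixed sequence, the indicators $\{F_n(k,x_k)\text{ is white}\}$ are independent across $k$ because they depend on Poisson points in disjoint time-slabs, and each has probability at most $\exp(-(\log n)^{c_1})$ by Lemma~\ref{black}. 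A Chernoff bound gives probability $\exp(-c\epsilon K(\log n)^{c_1})$ for more than $\epsilon K$ whites in the sequence. Since $(\log n)^{c_1}$ dominates $\log\log n$ for large $n$, the union bound yields a bound far smaller than $\exp(-n(\log n)^{-\beta})$.

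For (b), suppose \eqref{small-jump} and \eqref{many-black} hold but \eqref{linear} fails, with $\alpha>1$. By Jensen's inequality, each of the more than $\epsilon^2 K$ time blocks with spatial displacement at least $M(\log n)^\beta$ contributes at least $M^\alpha(\log n)^\beta$ to $T_n$. Each of the at least $(1-\epsilon)K$ black faces traversed contributes at least $c_1(\log n)^\beta$, using \eqref{eq:black>1} when the displacement is within $2M(\log n)^\beta$ and Jensen beyond. Summing, $T_n\ge n[c_1(1-\epsilon)+(M^\alpha-c_1)\epsilon^2]$, which exceeds $2\mu n$ once $M$ is taken large depending on $\epsilon$. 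The remaining task, and what I expect to be the main technical obstacle since it dictates the rate $n^{1\wedge d/\alpha}$, is an upper-tail bound $\P(T_n\ge 2\mu n)\le\exp(-cn^{1\wedge d/\alpha})$. This should follow by comparison with the nearest-neighbor path, whose cost is a sum of $n$ i.i.d.\ variables $Y_k=r_k^\alpha$ with $\P(Y_k\ge t)\le\exp(-ct^{d/\alpha})$: for $d\ge\alpha$ a Cram\'er-type bound yields rate $n$, while for $d<\alpha$ the stretched-exponential tails yield rate $n^{d/\alpha}$ by a single-large-summand analysis. Combining (a) and (b) yields the claim, with the weaker $n^{1\wedge d/\alpha}(\log n)^{-\beta}$ factor dominating.
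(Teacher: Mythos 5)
Your proposal is correct and follows essentially the same route as the paper: a union bound over admissible face sequences (with entropy $\exp\{O(K\log\log n)\}$ beaten by the white-face probability from Lemma~\ref{black}) for part (a), and a Jensen-type lower bound $T_n> M^\alpha\epsilon^2 n$ combined with an upper-tail estimate $\P(T_n\ge Cn)\le\exp\{-cn^{1\wedge d/\alpha}\}$ for part (b). The only difference is that the paper simply cites this upper-tail estimate (Lemma~3.2 of~\cite{CFNY15}) rather than rederiving it via the greedy nearest-neighbour path as you sketch, and it does not need the black-face contribution in the lower bound for (b).
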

\begin{proof}
Let us begin with 
\begin{equation*}
\begin{split}
&\P(\textrm{\eqref{small-jump} holds and \eqref{many-black} fails})\\
&\quad \le \sum_{v_1,\ldots,v_{n(\log n)^{-\beta}}}
\P\left(\#\{k\colon F_n(k,v_k)\textrm{ is black}\}
<(1-\epsilon)n(\log n)^{-\beta}\right),
\end{split}
\end{equation*}
where the sum runs over sequences in $4\Z^d$ satisfying $v_1=0$ and 
$|v_k-v_{k+1}|\le(\log n)^{(\beta+\epsilon)}$. 
There are at most $((\log n)^{d(\beta+\epsilon)})^{n(\log n)^{-\beta}}$ such
sequences and for each of them, the number of possibilities to place less than
$(1-\epsilon)n(\log n)^{-\beta}$ black faces is bounded by
\begin{equation*}
(1-\epsilon)n(\log n)^{-\beta} \binom{n(\log n)^{-\beta}}
{(1-\epsilon)n(\log n)^{-\beta}}
\le \exp\set{n(\log n)^{-\beta}},
\end{equation*}
which is much smaller than the above 
$((\log n)^{d(\beta+\epsilon)})^{n(\log n)^{-\beta}}$. 
As the rest of faces must be white, by using Lemma~\ref{black} we get
\begin{equation*}
\begin{split}
&\P(\textrm{\eqref{small-jump} holds and \eqref{many-black} fails})\\
&\quad \le \left((\log n)^{d\beta}\right)^{2n(\log n)^{-\beta}}
\P(F_n\textrm{ is white})^{\epsilon n(\log n)^{-\beta}}\\
&\quad\le \exp\set{-n(\log n)^{-\beta}}.
\end{split}
\end{equation*}


Next, when $\alpha>1$ and~\eqref{linear} fails to hold,
then $T_n(\omega)>M^\alpha \epsilon^2 n$ since the passage time 
cannot be shorter than the one for the straight line. Thus if we choose $M$ sufficiently large, then 
$\P(\textrm{\eqref{linear} fails})\le \exp\{-cn^{1\wedge \frac{d}{\alpha}}\}$ 
by Lemma~3.2 in~\cite{CFNY15}. 
\end{proof}
Let us define
\begin{equation*}
\begin{split}
\mathcal{E}=\bigl\{\omega\colon &\textrm{\eqref{small-jump}--\eqref{linear} 
 hold and for every $0\le k\le n(\log n)^{-\beta}$,}\\ 
 &\quad \omega\left(\{k(\log n)^\beta\}\times 
B(\geo_{n;\omega}(k(\log n)^\beta), e^{-n})\right)=1\bigr\}
\end{split}
\end{equation*}
and for $\omega\in \mathcal{E}$, we define a re-sampling operation 
as follows (see also Figure~\ref{resampling}):
\begin{enumerate}
\item[\textbf{(i)}] fix $\theta<1$ and choose an ordered subset $(k_1,\ldots,k_{n^\theta})$ of
\begin{equation*}
\begin{split}
 \{k\colon &F_n(k,x) \textrm{ is black and crossed by }\geo_{n;\omega}
 \textrm{ and when }\alpha>1, \\
 &|[\geo_{n;\omega}(k(\log n)^\beta)]-[\geo_{n;\omega}((k+1)(\log n)^\beta)]|
 < M(\log n)^\beta\}
\end{split}
\end{equation*}  
uniformly at random (\eqref{many-black} and \eqref{linear} ensure that there are at least $(1-2\epsilon)n(\log n)^{-\beta}$ such $k$'s), 
\item[\textbf{(ii)}] for each $1\le j\le n^\theta$, 
replace $\omega$ by an independent copy in the tubes with the width $(\log n)^{2\beta}$ around 
\begin{equation}
\bigcup_{l=0}^{(\log n)^\beta}
\set{(k_j(\log n)^\beta+l,[\geo_{n;\omega}(k_j(\log n)^\beta)])}
\label{tube<1}
\end{equation}
when $\alpha<1$, and around the straight lines connecting
\begin{equation}
\begin{split}
& \left(k_j(\log n)^\beta,[\geo_{n;\omega}(k_j(\log n)^\beta)]
\right)\textrm{ and }\\
&\quad \left((k_j+1)(\log n)^\beta,[\geo_{n;\omega}((k_j+1)(\log n)^\beta)]
\right)
\end{split}
\label{tube>1}
\end{equation}
when $\alpha>1$, except for the $e^{-n}$ neighborhoods of 
$\geo_{n;\omega}(k_j(\log n)^\beta)$ and 
$\geo_{n;\omega}((k_j+1)(\log n)^\beta)$ in both cases. 
\end{enumerate}
\begin{figure}[h]
\includegraphics[width=300pt]{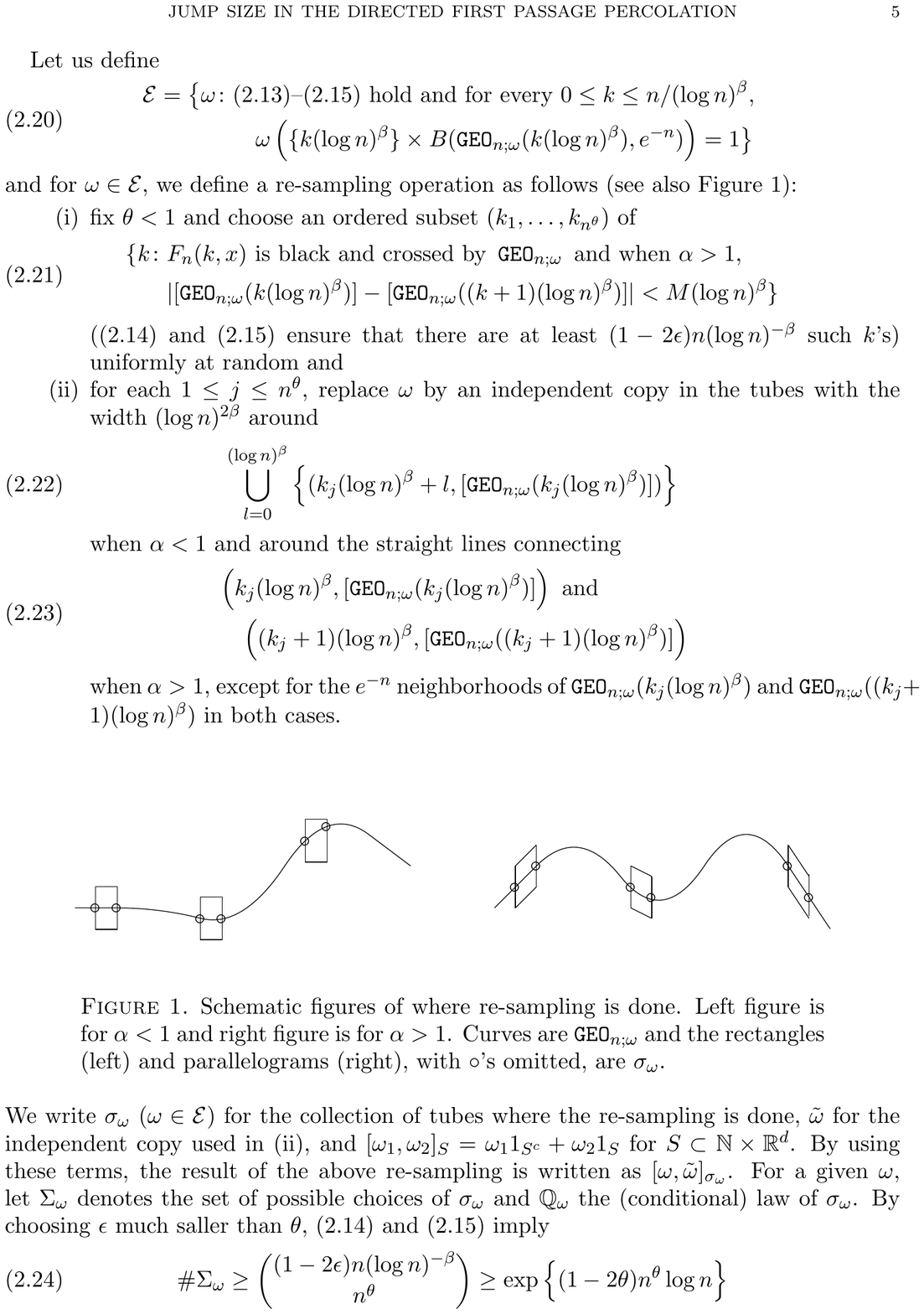}
\caption{Schematic figures of where re-sampling is done. 
Left figure is for $\alpha<1$ and 
right figure is for $\alpha>1$. Curves are $\geo_{n;\omega}$ and the rectangles
(left) and parallelograms (right), with $\circ$'s omitted, are $\sigma_\omega$.}
\label{resampling} 
\end{figure}
We write $\sigma_\omega$ ($\omega\in\mathcal{E}$) for the collection of
tubes where the re-sampling is done, $\tilde\omega$ for the independent
copy used in~(ii), and 
$[\omega_1, \omega_2]_S=\omega_1 1_{S^c}+\omega_2 1_S$
for $S\subset \N\times\R^d$. 
By using these terms, the result of the above re-sampling is written as
$[\omega,\tilde\omega]_{\sigma_\omega}$.
For a given~$\omega$, let $\Sigma_\omega$ denotes the set of possible 
choices of $\sigma_\omega$ and $\Q_\omega$ the (conditional) 
law of $\sigma_\omega$. By choosing $\epsilon$ much smaller than $\theta$, 
\eqref{many-black} and \eqref{linear} imply 
\begin{equation}
\label{cardinality}
 \#\Sigma_\omega\ge \binom{(1-2\epsilon)n(\log n)^{-\beta}}{n^\theta}
\ge \exp\set{(1-2\theta)n^\theta\log n}
\end{equation}
for sufficiently large $n$ uniformly in $\omega\in\mathcal{E}$. 
Finally, the joint law of 
$(\omega,\sigma_\omega,\tilde\omega)$ is denoted by $\bar\P$.
\begin{definition}
\label{tunnel}
For a tube of the form~\eqref{tube<1} or~\eqref{tube>1}, let 
\begin{align*}
D_{k_j}=
\begin{cases}
0, &\alpha<1,\\
(\log n)^{-\beta}
(\geo_{n;\omega}((k_j+1)(\log n)^\beta)-\geo_{n;\omega}(k_j(\log n)^\beta)), 
&\alpha> 1.
\end{cases}
\end{align*}
The tube is \emph{tunneling} for $\tilde\omega$ if the following conditions are satisfied: for every $1 \le l<(\log n)^\beta$,
\begin{equation*}
\begin{split}
& \{k_j(\log n)^\beta+l\}\times \\
&\quad \left(B\left(\geo_{n;\omega}(k_j(\log n)^\beta)
 + l D_{k_j}+\frac{3}{2}\mathbf{e}_1
 (\log n)^\epsilon1_{\{l=(\log n)^\beta/2\}},c_1^{2/(\alpha\wedge 1)}
 \right) \right)
\end{split}
\end{equation*} 
contains exactly one point of $\tilde\omega$, and otherwise there is no point in
\begin{equation*}
\bigcup_{l=0}^{(\log n)^\beta} 
\{k_j(\log n)^\beta+l\}\times ( B([\geo_{n;\omega}(k_j(\log n)^\beta)]+lD_{k_j},(\log n)^{2\beta})). 
\end{equation*}
\end{definition}

Figure~\ref{tunneling} shows how tunneling tubes look like.
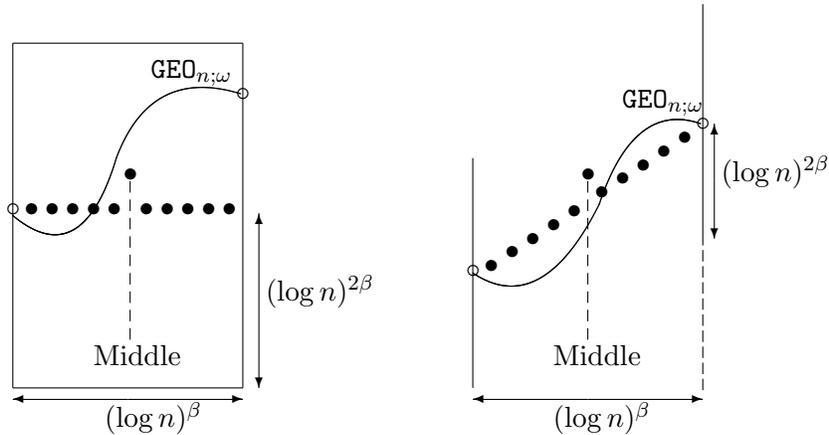
\begin{figure}[b]
\setlength{\unitlength}{0.87pt}
 \begin{picture}(200,200)(-100,-20)
 \put(-170,0){\line(0,1){150}}
 \put(-70,0){\line(0,1){150}}
 \put(-170,0){\line(1,0){100}}
 \put(-170,150){\line(1,0){100}}
 \multiput(-165,75)(9,0){5}{$\bullet$}
 \multiput(-115,75)(9,0){5}{$\bullet$}
 \multiput(-119,90)(0,-8){9}{\line(0,-1){5}}
 \put(-135,10){Middle}
 \put(-122,90){$\bullet$}
 \put(-173,75){$\circ$}
 \put(-73,125){$\circ$}
 \put(-120,-5){\vector(1,0){50}}
 \put(-120,-5){\vector(-1,0){50}}
 \put(-130,-17){$(\log n)^\beta$}
 \put(-63,38){\vector(0,1){38}}
 \put(-63,38){\vector(0,-1){38}}
 \put(-60,38){$(\log n)^{2\beta}$}
 \qbezier(-170,75)(-140,50)(-125,100)
 \qbezier(-125,100)(-110,140)(-71,128)
 \put(-110,135){$\geo_{n;\omega}$}
 
 \put(30,0){\line(0,1){100}}
 \put(130,67){\line(0,1){100}}
 \put(30,0){\line(3,2){100}}
 \put(30,100){\line(3,2){100}}
 \multiput(35,50)(9,6){5}{$\bullet$}
 \multiput(83,82)(9,6){5}{$\bullet$}
 \multiput(130,67)(0,-7){10}{\line(0,-1){5}}
 \multiput(80,90)(0,-8){9}{\line(0,-1){5}}
 \put(77,90){$\bullet$}
 \put(27,48){$\circ$}
 \put(127,112){$\circ$}
 \put(80,-5){\vector(1,0){50}}
 \put(80,-5){\vector(-1,0){50}}
 \put(65,10){Middle}
 \put(65,-17){$(\log n)^\beta$}
 \put(135,90){\vector(0,1){25}}
 \put(135,90){\vector(0,-1){25}}
 \put(138,90){$(\log n)^{2\beta}$}

 \qbezier(30,50)(60,30)(85,80)
 \qbezier(85,80)(100,125)(129,115)
 \put(95,122){$\geo_{n;\omega}$}
 \end{picture}
\caption{Tunneling tubes for $\alpha<1$ (left) and $\alpha> 1$ (right). 
Except for the middle, there are 
points of $\tilde\omega$ (indicated by $\bullet$) near the straight line at 
the center
of the tube. The gap in the middle is close to $\frac{3}{2}(\log n)^\epsilon$. }
\label{tunneling} 
\end{figure}
A simple computation shows the following lemma. 
\begin{lemma}
\label{lem-tunnel}
There exist positive constants $c_2$ such that for sufficiently large $n$, 
for any tube of the form~\eqref{tube<1} or~\eqref{tube>1},
\begin{equation*}
\bar\P(\textrm{The tube is tunneling for }\tilde\omega)
\ge \exp\set{-c_2(\log n)^{(2d+1)\beta}}.
\end{equation*}
\end{lemma}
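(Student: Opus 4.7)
The plan is to exploit the fact that $\tilde\omega$, viewed section by section, consists of independent homogeneous Poisson point processes on $\R^d$ with unit intensity, and that the conditions defining a tunneling tube are stated separately at each integer slab $\{k_j(\log n)^\beta+l\}\times\R^d$ with $l=0,1,\ldots,(\log n)^\beta$. The tunneling event therefore factorizes across $l$, and the whole computation reduces to a marginal Poisson calculation at each slab followed by taking the product.

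For $l\in\{0,(\log n)^\beta\}$, the prescribed event $A_l$ asks only that the ball of radius $(\log n)^{2\beta}$ around the appropriate center be empty of points of $\tilde\omega$, which has probability $\exp\{-c(\log n)^{2d\beta}\}$ where $c$ is the volume of the unit ball in $\R^d$. For the intermediate slabs $1\le l<(\log n)^\beta$, the event $A_l$ asks for exactly one point of $\tilde\omega$ in the small ball of radius $c_1^{2/(\alpha\wedge 1)}$ (centered at the shifted position when $l=(\log n)^\beta/2$) together with zero points in its complement within the big ball. Applying the Poisson formula to the two disjoint regions yields
\begin{equation*}
\P(A_l) = c\,c_1^{2d/(\alpha\wedge 1)}\exp\{-c(\log n)^{2d\beta}\},
\end{equation*}
with the small-ball volume providing the prefactor.

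Multiplying $\P(A_l)$ over the $(\log n)^\beta+1$ slabs gives
\begin{equation*}
\bar\P(\textrm{tunneling}) \ge \bigl(c\,c_1^{2d/(\alpha\wedge 1)}\bigr)^{(\log n)^\beta}\exp\bigl\{-c((\log n)^\beta+1)(\log n)^{2d\beta}\bigr\}.
\end{equation*}
The logarithm equals $-c(\log n)^{(2d+1)\beta}+O((\log n)^\beta)$, and since the first term has strictly larger order, the bound in the statement follows with a suitable constant $c_2$. No substantive obstacle is anticipated: the entire argument is a direct Poisson probability computation, enabled by independence across time sections; one only needs to check that the shift $\tfrac{3}{2}\mathbf{e}_1(\log n)^\epsilon$ at the middle slab keeps the small ball inside the big ball (which holds since $(\log n)^\epsilon\ll(\log n)^{2\beta}$), so that the same volume estimate applies at every intermediate $l$.
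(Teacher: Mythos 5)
Your computation is correct and is exactly the ``simple computation'' the paper alludes to without writing out: the paper gives no proof of Lemma~\ref{lem-tunnel} beyond that remark, and the intended argument is precisely the slab-by-slab factorization of the Poisson process followed by the one-point/empty-region probability at each time section. The bookkeeping is right --- the dominant cost is the empty big balls, $(\log n)^\beta$ slabs times volume $(\log n)^{2d\beta}$ giving the exponent $(2d+1)\beta$, with the small-ball prefactors of lower order --- and your check that the shift $\tfrac32\mathbf{e}_1(\log n)^\epsilon$ stays inside the big ball is the only point needing the standing assumption $\epsilon<\beta$.
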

\begin{definition}
Let
\begin{equation}
\begin{split}
   T^{\wedge}_n(\omega)
 =\inf\biggl\{T_n(\gamma)\colon &\gamma(0)=0, 
 \gamma\textrm{ is open and }\label{passtime2}\\
 &\max_{1\le k\le n}|\gamma(k-1)-\gamma(k)|\le 2(\log n)^{\beta+\epsilon}
 \biggr\} 
\end{split}
\end{equation}
with the convention that $\inf\emptyset=\infty$. 
Corresponding geodesics
are denoted by 
$\geo^{\wedge}_{n;\omega}$. 
\end{definition}

Now we introduce the following event: 
\begin{equation*}
\begin{split}
 \bar{\mathcal{E}}=\Bigl\{(\omega,\sigma_\omega,\tilde\omega)\colon
 &\omega\in\mathcal{E},
 \geo^{\wedge}_{n;\omega}=\geo^{\wedge}_{n;[\omega,\tilde\omega]_{\sigma_\omega}}
 \textrm{ outside }\sigma_\omega \textrm{ and }\\
 &\textrm{all tubes in $\sigma_\omega$ are 
 tunneling for }\tilde\omega\Bigr\}.
\end{split}
\end{equation*}
\begin{lemma}
\label{4}
For $\epsilon$ sufficiently small depending on $\alpha$ and $\beta$,  
 \begin{equation*}
 \bar\P(\bar{\mathcal{E}})
 \ge \P(\mathcal{E})\exp\set{-c_2n^\theta (\log n)^{\beta(2d+1)}}.
 \end{equation*}
\end{lemma}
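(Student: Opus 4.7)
The plan is to condition on $\omega$ and on the choice of $\sigma_\omega$, and to exploit the independence of $\tilde\omega$ from $(\omega,\sigma_\omega)$. Writing
\begin{equation*}
 \bar\P(\bar{\mathcal{E}})
 =\E\Bigl[1_{\mathcal{E}}(\omega)\sum_{\sigma \in \Sigma_\omega}\Q_\omega(\sigma)\,\bar\P\bigl(\text{(b) and (c) hold}\mid\omega,\sigma\bigr)\Bigr],
\end{equation*}
where (b) and (c) stand for the two non-trivial clauses of $\bar{\mathcal{E}}$, it suffices to lower bound the inner conditional probability uniformly in $(\omega,\sigma)$ by $\exp\{-c_2n^\theta(\log n)^{(2d+1)\beta}\}$. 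Since the $n^\theta$ tubes in $\sigma_\omega$ sit in distinct time slabs, they are pairwise disjoint, so the tunneling events for different tubes depend on $\tilde\omega$ restricted to disjoint space-time regions. Poisson independence together with Lemma~\ref{lem-tunnel} immediately yields
\begin{equation*}
 \bar\P(\text{all tubes tunneling}\mid\omega,\sigma)\ge\exp\{-c_2n^\theta(\log n)^{(2d+1)\beta}\}.
\end{equation*}

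The remaining work is to show that on the tunneling event (c), condition (b) is deterministic. I would exhibit a candidate path $\gamma^\star$ in $[\omega,\tilde\omega]_{\sigma_\omega}$ that agrees with $\geo^\wedge_{n;\omega}$ outside $\sigma_\omega$ and, inside each tunneled tube, follows the essentially forced Poisson pattern (with two jumps of size $\approx\tfrac{3}{2}(\log n)^\epsilon$ around $l=(\log n)^\beta/2$). Bounding each in-tube jump by using the ball radii $c_1^{2/(\alpha\wedge 1)}$ from Definition~\ref{tunnel} and a Taylor expansion around $D_{k_j}$, the in-tube cost of $\gamma^\star$ is at most $|D_{k_j}|^\alpha(\log n)^\beta+C(c_1)(\log n)^\beta+2(\tfrac{3}{2}(\log n)^\epsilon)^\alpha$ with $C(c_1)\to 0$ as $c_1\to 0$, while the black-face property in Definition~\ref{def:black} forces the in-tube cost of $\geo^\wedge_{n;\omega}$ to be at least $(|D_{k_j}|^\alpha+c_1)(\log n)^\beta$ for $\alpha>1$ (respectively at least $c_1(\log n)^\beta$ for $\alpha<1$). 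Choosing $c_1$ small and then $\epsilon<\beta/\alpha$, this produces a per-tube saving of at least $\tfrac{c_1}{2}(\log n)^\beta$, hence
\begin{equation*}
 T_{\gamma^\star}([\omega,\tilde\omega]_{\sigma_\omega})\le T^{\wedge}_n(\omega)-\tfrac{c_1}{2}n^\theta(\log n)^\beta.
\end{equation*}

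To upgrade this into (b), I would argue that any admissible competitor $\gamma$ in $[\omega,\tilde\omega]_{\sigma_\omega}$ must either follow the tunneling pattern inside every resampled tube -- forced once $\epsilon<\beta$, since the spatial width $(\log n)^{2\beta}$ of a tube exceeds the admissible jump $2(\log n)^{\beta+\epsilon}$, so the path cannot escape the tube after entering and the only available Poisson points are the tunneled ones -- or else bypass some tube by detouring through the unchanged $\omega$. In the first subcase the in-tube segments of $\gamma$ and $\gamma^\star$ coincide, and the almost sure uniqueness of point-to-point geodesics in $\omega$ forces $\gamma=\gamma^\star$ outside $\sigma_\omega$. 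In the second subcase the bypassed black face still charges $\gamma$ at least the original in-tube cost of $\geo^\wedge_{n;\omega}$, so the cumulative $\tfrac{c_1}{2}n^\theta(\log n)^\beta$ margin cannot be recovered. The main obstacle is making this detour analysis rigorous when $\gamma$ mixes tunneled passages with detours across many different tubes: one has to track the black-face penalties along unchanged portions and verify that the tunnel savings dominate both the mid-jump contribution $\sim n^\theta(\log n)^{\alpha\epsilon}$ and the Poisson fluctuations inside the tunnels, which is precisely where the choice of $\epsilon$ depending on $(\alpha,\beta)$ enters.
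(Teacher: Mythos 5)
Your proposal follows the paper's proof almost exactly: the reduction to the ``all tubes tunneling'' probability via Lemma~\ref{lem-tunnel} and the disjointness of the $n^\theta$ tubes, the construction of the candidate path that follows the tunnels, and the per-tube comparison between the tunnel cost (small jumps of size $c_1^{2/(\alpha\wedge1)}$ plus the two mid-jumps of order $(\log n)^\epsilon$ plus the reconnection jump) and the black-face lower bound from Definition~\ref{def:black} are all the paper's steps, with the same smallness conditions on $c_1$ and $\epsilon$. The one place where you diverge---and which you yourself flag as the remaining obstacle---is the verification that the restricted geodesic of $[\omega,\tilde\omega]_{\sigma_\omega}$ actually coincides with the tunnel-modified path. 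Your sketch via global cost accounting (``the bypassed black face still charges $\gamma$ at least the original in-tube cost'') has a soft spot: Definition~\ref{def:black} only lower-bounds $T_{v,w}$ for $v$ lying \emph{in the face}, so a competitor that detours around a tube need not pass through the relevant black face and is not directly charged by it. The paper instead argues structurally and recursively: any admissible competitor using no $\tilde\omega$-point has passage time at least $T_n(\omega)$, hence loses to the candidate path; to use a $\tilde\omega$-point it must enter a tube through the shared endpoint $l_\tau$ and leave through $r_\tau$, since any other entry forces a jump of order $(\log n)^{2\beta}\gg(\log n)^{\beta+\epsilon}$; and the same argument applied to the segments from $0$ to $l_\tau$ and from $r_\tau$ to $\{n\}\times\R^d$ shows the geodesic traverses every tube and, by uniqueness of geodesics in $\omega$, agrees with $\geo^{\wedge}_{n;\omega}$ in between. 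Substituting that recursion for your accounting step makes the argument complete and identical to the paper's.
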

\begin{proof}
We show that the second condition in $\bar{\mathcal{E}}$ is a consequence of
the others. Once this is shown, we have
\begin{equation*}
 \bar\P(\bar{\mathcal{E}})
 =\bar{\E}\left[ 
 \bar{\P}(\textrm{All tubes in $\sigma_\omega$ are 
 tunneling for }\tilde\omega\mid\sigma_\omega)1_{\{\omega\in\mathcal{E}\}}
 \right].
\end{equation*}
Since $\sigma_\omega$ consists of $n^\theta$ tubes for any 
$\omega\in\mathcal{E}$, the above conditional probability is bounded from 
below by $\exp\{-c_2n^\theta (\log n)^{\beta(2d+1)}\}$
by Lemma~\ref{lem-tunnel} and the desired bound follows. 

Note first that $\geo^{\wedge}_{n;\omega}=\geo_{n;\omega}$ for 
$\omega\in\mathcal{E}$ due to~\eqref{small-jump}. 
Suppose $\tilde\omega$ provides a tunneling configuration for all tubes in 
$\sigma_\omega$. Then for each of tube in $\sigma_\omega$, there is only 
one path satisfying the constraint in~\eqref{passtime2} inside, 
which we call \emph{tunnel}. We show that 
$\geo^{\wedge}_{n;[\omega,\tilde\omega]_{\sigma_\omega}}$ is the path $\gamma$
obtained by changing $\geo^{\wedge}_{n;\omega}$ to the tunnels in 
$\sigma_\omega$. 

Let us first check that the above operation improves the passage time, that is,
$T_\gamma([\omega,\tilde\omega]_{\sigma_\omega})<T_n(\omega)$.  
When $\alpha<1$, a tunnel has jumps smaller than $2c_1^{2/\alpha}$ except 
for the middle two jumps and the last one. The middle jumps are bounded 
by $2(\log n)^\epsilon$. 
Also as $\omega\in\mathcal{E}$, endpoints of a tunnel differ
at most $(\log n)^{(\beta+\epsilon)}+2\sqrt{d}$ and hence the last jump is 
bounded by $2(\log n)^{(\beta+\epsilon)}$. 
Therefore the above path satisfies the restriction in~\eqref{passtime2} and, if $\epsilon$ is so small that $\alpha(\beta+\epsilon)<\beta$,
its passage time is less than 
\begin{equation*}
 2^\alpha\left[c_1^2(\log n)^\beta
 +2(\log n)^{\alpha\epsilon}+(\log n)^{\alpha(\beta+\epsilon)}\right]
< c_1(\log n)^\beta/2 
\end{equation*}
for large $n$, by letting $c_1$ small if necessary. 
Since the tube in $\sigma_\omega$ is assumed to be black, this is strictly
smaller than the original passage time. 
The argument for $\alpha>1$ is similar. Indeed, the jumps in the $j$-th tunnel are, except for the middle two, smaller than $|D_{k_j}|+2c_1^2$ ($D_{k_j}$ is defined in Defninition~\ref{tunnel}). 
Therefore, the above path again satisfies the restriction in~\eqref{passtime2} 
and its passage time is, if $\epsilon<(\beta/\alpha)\wedge (1/2)$, 
less than
\begin{equation*}
(|D_{k_j}|+2c_1^2)^\alpha(\log n)^\beta
 +2^{1+\alpha}(\log n)^{\alpha\epsilon}
< (|D_{k_j}|^\alpha+c_1) (\log n)^\beta.
\end{equation*}
This is smaller than the original passage time by the definition of the
black box. 

Next we show that $\gamma$ is the best among the paths satisfying the constraint
in~\eqref{passtime2}. Any path which does not use the points in $\tilde\omega$ 
has a passage time larger than $T_n(\omega)$ and by the previous
step, it is not a geodesic for $[\omega,\tilde\omega]_{\sigma_\omega}$. 
On the other hand, the only way for a path satisfying the constraint 
in~\eqref{passtime2} to enter a tube $\tau\in\sigma_\omega$ is to share 
the left and right endpoints with $\geo_{n;\omega}$, which we call $l_\tau$ 
and $r_\tau$ respectively, since any other way
forces the path to make a jump larger than 
$(\log n)^{2\beta}/2\gg 2(\log n)^{\beta+\epsilon}$
(here we use the fact that the ``slope'' $D_{k_j}$ of tunnels are bounded by $M$). Repeating this argument for the passage time from $0$ to $l_\tau$ and 
from $r_\tau$ to $\{n\}\times \R^d$, we see that 
$\geo^{\wedge}_{n;[\omega,\tilde\omega]_{\sigma_\omega}}$ has to go through 
all the tubes in $\sigma_\omega$ sharing the left and right endpoints with 
$\geo_{n;\omega}$. This implies 
$\gamma=\geo^{\wedge}_{n;[\omega,\tilde\omega]_{\sigma_\omega}}$ and hence
$\geo^{\wedge}_{n;\omega}=\geo^{\wedge}_{n;[\omega,\tilde\omega]_{\sigma_\omega}}$ outside $\sigma_\omega$. 
\end{proof}
\begin{lemma}
\label{5}
There exists $c_3>0$ such that 
\begin{equation*}
 \bar\P(\bar{\mathcal{E}})
 \le \exp\{-c_3 n^\theta\log n\}.
 \end{equation*}
\end{lemma}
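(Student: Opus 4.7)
The plan is to exploit the exponentially large cardinality of $\Sigma_\omega$ from~\eqref{cardinality} via a measure-preserving swap, in the spirit of a BK-type argument. The guiding idea is that every configuration $\omega'$ arising from $\bar{\mathcal{E}}$ admits essentially a unique preimage $\sigma$, while $\Q_\omega(\{\sigma\})$ already costs a factor $\exp\{-(1-2\theta)n^\theta \log n\}$ for any specific $\sigma$.

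First I would decompose
\begin{equation*}
\bar\P(\bar{\mathcal{E}}) = \sum_{\sigma} \int 1_{\bar{\mathcal{E}}}(\omega,\sigma,\tilde\omega)\, \Q_\omega(\{\sigma\})\, \P(d\omega)\,\P(d\tilde\omega),
\end{equation*}
and for each fixed $\sigma$ change variables to $\omega':=[\omega,\tilde\omega]_\sigma$ and $\tilde\omega':=[\tilde\omega,\omega]_\sigma$. Since the restrictions of two independent Poisson point processes to a fixed measurable set and its complement are independent, this swap preserves the law of $(\omega,\tilde\omega)$; inverting gives $\omega=[\omega',\tilde\omega']_\sigma$ and $\tilde\omega=[\tilde\omega',\omega']_\sigma$. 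On $\bar{\mathcal{E}}$ expressed in the primed coordinates, $[\omega',\tilde\omega']_\sigma\in\mathcal{E}$, so~\eqref{cardinality} yields $\Q_{[\omega',\tilde\omega']_\sigma}(\{\sigma\})\le \exp\{-(1-2\theta)n^\theta\log n\}$ uniformly in the integrand.

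The remaining and main step is to show that for a.e.~$(\omega',\tilde\omega')$ there is at most one $\sigma$ for which the swapped triple lies in $\bar{\mathcal{E}}$; equivalently, $\sigma$ can be recovered from $\omega'$ alone. This is where the specific construction of the tunnels enters: on $\bar{\mathcal{E}}$ the constrained geodesic $\geo^{\wedge}_{n;\omega'}$ passes through each tunnel in $\sigma$, and Definition~\ref{tunnel} forces a middle jump of size $\sim \tfrac{3}{2}(\log n)^\epsilon$, which strictly exceeds the threshold $(\log n)^\epsilon$ that bounds the jumps of $\geo^{\wedge}_{n;\omega'}$ outside $\sigma$ (via $\omega\in\mathcal{E}$ together with the identity $\geo^{\wedge}_{n;\omega}=\geo^{\wedge}_{n;\omega'}$ outside $\sigma$ that is built into $\bar{\mathcal{E}}$). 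The time coordinates $k_1,\dots,k_{n^\theta}$ of the tubes can then be read off as the times at which $\geo^{\wedge}_{n;\omega'}$ makes anomalously large jumps, and their spatial anchors are determined by $\geo^{\wedge}_{n;\omega'}$ at the tube endpoints, which survive the re-sampling because their $e^{-n}$ neighborhoods are protected.

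Combining $\sum_\sigma 1_{\bar{\mathcal{E}}(\cdots)}\le 1$ with the $\Q$-bound yields $\bar\P(\bar{\mathcal{E}})\le \exp\{-(1-2\theta)n^\theta\log n\}$, so any $c_3<1-2\theta$ works (which imposes $\theta<1/2$, a condition one is free to arrange). The hard part is precisely the $\sigma$-recovery step: the $\tfrac{3}{2}(\log n)^\epsilon$ offset and the protective $e^{-n}$ neighborhoods in Definition~\ref{tunnel} and in the re-sampling are there exactly to make this recovery unambiguous, so one must verify that no spurious jump of $\geo^{\wedge}_{n;\omega'}$ outside the tubes can exceed $(\log n)^\epsilon$ and confuse the decoding.
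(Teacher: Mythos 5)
Your proposal is correct and follows essentially the same route as the paper: the measure-preserving swap of $\omega$ and $\tilde\omega$ on a fixed $\sigma$, the bound $\Q_\omega(\{\sigma\})\le 1/\#\Sigma_\omega$ via~\eqref{cardinality}, and the key uniqueness step in which the tunneling tubes force jumps exceeding $(\log n)^\epsilon$ that would contradict~\eqref{small-jump} for any competing $\sigma$, so that $\sum_\sigma 1_{\bar{\mathcal{E}}}\le 1$. The paper phrases the last step as disjointness of the events $\{([\omega,\tilde\omega]_S,\omega)\in\mathcal{E}_S\}$ over $S$ rather than as decoding $\sigma$ from $\omega'$, but the mechanism is identical.
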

\begin{proof}
For any $S\in \{\sigma_\omega\colon \omega\in\mathcal{E}\}$, we define
\begin{equation*}
\begin{split}
\mathcal{E}_S=\bigl\{(\omega_1,\omega_2)\colon&
 \omega_1\in\mathcal{E}, 
 S\in\Sigma_{\omega_1}, \omega_1=\omega_2\textrm{ and }
 \geo^{\wedge}_{n;\omega_1}=\geo^{\wedge}_{n;\omega_2}
 \textrm{ outside }S,\\
&\textrm{ and all tubes in }S\textrm{ are tunneling for }\omega_2\bigr\}.
\end{split}
\end{equation*}
Then we have 
\begin{equation*}
\begin{split}
\bar\P(\bar{\mathcal{E}})
&=\int \P^{\otimes2}(\dd\omega,\dd\tilde\omega)\Q_\omega(\dd\sigma)
 \sum_S 1_{\{\sigma=S\}}
 1_{\{(\omega,[\omega,\tilde\omega]_S)\in\mathcal{E}_S\}}\\
&\le \frac{1}{\min_{\omega\in\mathcal{E}}\#\Sigma_\omega}
 \int \P^{\otimes2}(\dd\omega,\dd\tilde\omega) 
 \sum_{S} 1_{\{([\omega,\tilde\omega]_S,\omega)\in\mathcal{E}_S\}},
\end{split}
\end{equation*}
where in the second inequality, we have used the fact that
$(\omega,[\omega,\tilde\omega]_S)$ has the same law as 
$([\omega,\tilde\omega]_S,\omega)$. 
We shall show that the sum on the right hand side does not exceed one,
that is, 
$\{(\omega,\tilde\omega)\colon([\omega,\tilde\omega]_S,\omega)\in\mathcal{E}_S\}$ 
are disjoint for different $S$. 

Suppose that $([\omega,\tilde\omega]_S,\omega)\in\mathcal{E}_S$ and 
$([\omega,\tilde\omega]_{S'},\omega)\in \mathcal{E}_{S'}$ for $S\neq S'$. 
Then $S\in\Sigma_{[\omega,\tilde\omega]_S}$ and thus $S$ consists of 
$n^\theta$ tubes crossed by $\geo^{\wedge}_{n;[\omega,\tilde\omega]_S}$ 
as shown in Figure~\ref{resampling}. Noting that 
$\geo^{\wedge}_{n;[\omega,\tilde\omega]_S}=\geo^{\wedge}_{n;\omega}$ outside $S$, 
it follows that $\geo^{\wedge}_{n;\omega}$ enters and exits the tubes in $S$
as if $S\in\Sigma_\omega$. 
Since these observations apply for $S'$  as well, it follows that $S'$ contains 
a tube disjoint with $S$ located along $\geo^{\wedge}_{n;\omega}$ (since both 
$S$ and $S'$ consist of the same number of tubes located along 
$\geo^{\wedge}_{n;\omega}$). 
However this contradicts $[\omega,\tilde\omega]_S\in\mathcal{E}$ as follows: 
note that $\geo^{\wedge}_{n;[\omega,\tilde\omega]_S}=\geo^{\wedge}_{n;\omega}$ 
in the tube found above ($\not\in S$) which is assumed to be tunneling 
for $\omega$. Then $\geo^{\wedge}_{n;[\omega,\tilde\omega]_S}$ must follow the 
tunnel (see the proof of Lemma~\ref{4} for the terminology), 
and hence it makes a jump larger than $(\log n)^\epsilon$ there.

Recalling~\eqref{cardinality}, we arrive at 
\begin{equation*}
\bar\P(\bar{\mathcal{E}})
\le \frac{1}{\min_{\omega\in\mathcal{E}}\#\Sigma_\omega}
\le \exp\set{-(1-2\theta)n^\theta\log n}.
\end{equation*}
\end{proof}
\begin{proof}
 [Proof of Theorem~\ref{thm:main}]
Combining Lemmas~\ref{4} and~\ref{5}, we obtain for $\beta<(2d+1)^{-1}$ that
\begin{equation*}
\P(\mathcal{E})
 \le \exp\{- n^\theta\}. 
\end{equation*}
Further invoking Lemma~\ref{3}, we arrive at
\begin{equation*}
\begin{split}
\P(\eqref{small-jump})
&\le \P(\mathcal{E})+\P(\textrm{\eqref{small-jump} 
 holds and either \eqref{many-black} or \eqref{linear} fails})\\
&\qquad+\P(\exists x,y\in \omega\cap
[0,n]\times[-n^{1+\epsilon},n^{1+\epsilon}]
\textrm{ with } |x-y|< e^{-n})\\
&\le \exp\{-n^\theta\}+\exp\{-n^{1\wedge\frac{d}{\alpha}}(\log n)^{-\beta}\}
+e^{-cn}.
\end{split}
\end{equation*}
This and the Borel--Cantelli lemma complete the proof.
\end{proof}

\section{Numerical experiments}
\label{sec:numerics}
In this section, some results from a numerical experiment are presented. We first see how the geodesics look like for $\alpha<1$ and $\alpha>1$. Keeping track of geodesics requires a large memory and it is done only for 1024 steps. Then we proceed to statistical studies. For this purpose, about 1300 ($n=4096$), 1000 ($n=8192$), 700 ($n=16382$), and 400 ($n=32764$) samples for nine values of $\alpha$ ranging from 0.5 to 1.3 are generated numerically. Using these data, we see how the maximal jump size, the maximal displacement, and the variance of the passage time behave as functions of $n$. 

Figure~\ref{fig:opt} shows the geodesics in 1024 steps with $\alpha=0.6$ and $\alpha=1.2$. It makes a large jump at the beginning when $\alpha=0.6$ while it is much smoother when $\alpha=1.2$. The reason can be explained as follows. The motivation for the geodesic to make a large displacement is to go to a ``good point'' from which the passage time is atypically small. Since the fluctuation of the passage time grows in time, there is a better chance to find a good point earlier than later. Now if there is a good point $(k,x)$ for relatively small $k$, then $x$ is typically far from the origin. The shortest path to go from $(0,0)$ to such a point $(k,x)$ tries to make one large jump and otherwise mostly horizontal when $\alpha<1$, and tries to stay close to the straight line between $(0,0)$ and $(k,x)$ when $\alpha>1$. 

\begin{figure}[h]
\includegraphics[width=320pt]{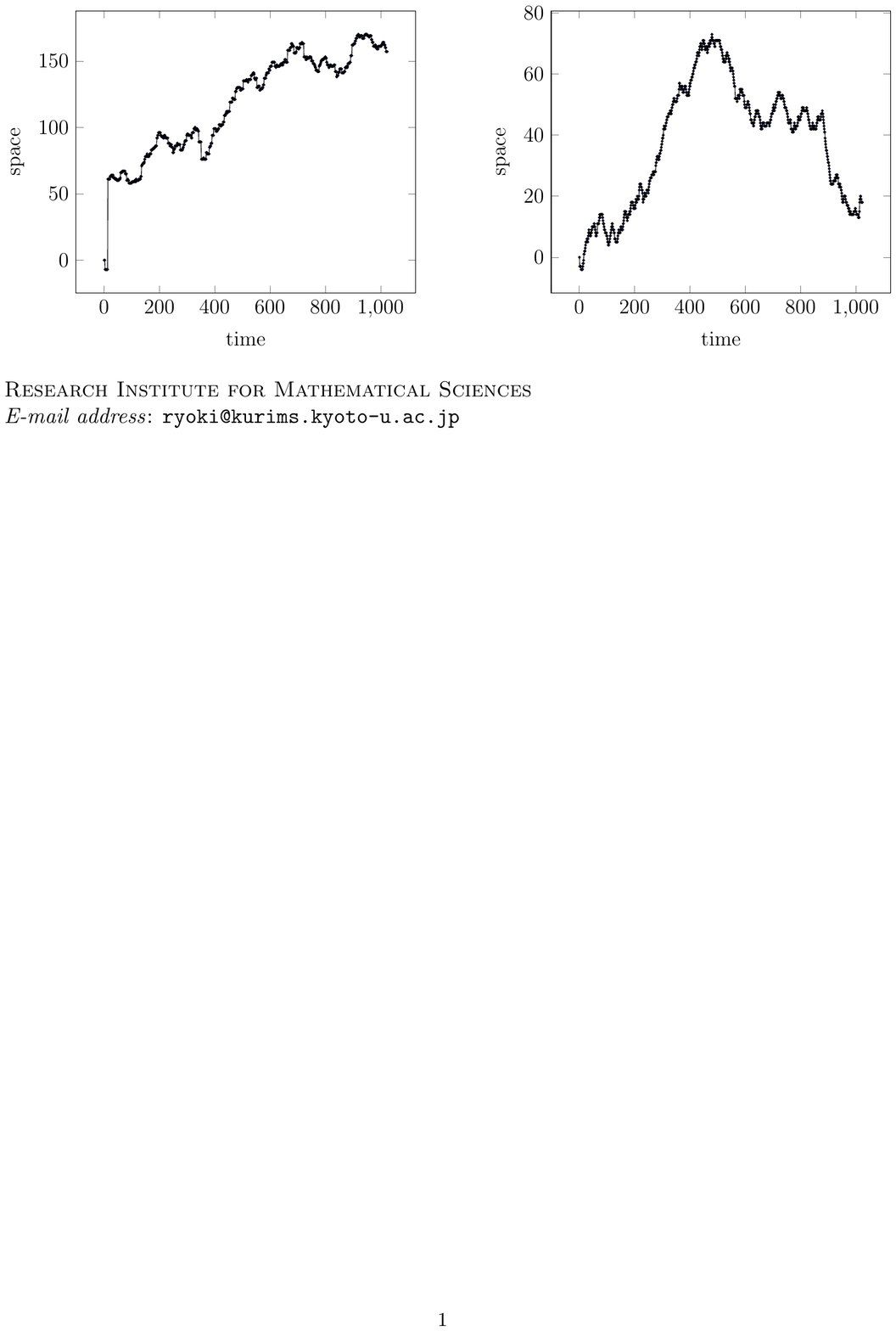}
\caption{The geodesics in 1024 steps with $\alpha=0.6$ (left), whose maximal jump size is 68, and with $\alpha=1.2$ (right) whose maximal jump size is 4.}
\label{fig:opt}
\end{figure}

Table~\ref{table:mean} shows the maximal jump sizes of geodesic. The size decreases rapidly as $\alpha$ approaches to 1. Assuming that the maximal jump size obeys a power law, their exponents are computed. We know from~\cite[Corollary~2.7]{Nak18} that this is not the case for $\alpha>1$. But the interesting regime here is $\alpha<1$, where the exponents are larger. This suggest that $(\log n)^\epsilon$ in Theorem~\ref{thm:main} can be improved to $n^\epsilon$. We leave this as a conjecture. 

\begin{table}[h]
\centering 
\begin{tabular}{c c c c c c}
\hline                      
$\alpha$ & $n=4096$ & $n=8192$ & $n=16382$ & $n=32764$ & Exponent\\
\hline                  
0.5 & 128 & 218 & 314 & 434 & 0.586\\
0.6 & 46.7 & 57.6 & 74.5 & 89.1 & 0.311\\
0.7 & 19.6 & 23.1  & 28.1 & 31.7 & 0.232\\
0.8 & 12.2 & 13.3 & 14.9 & 16.8 & 0.155\\
0.9 & 9.02 & 9.88 & 10.7 & 11.5 & 0.116\\       
1.0 & 5.59 & 5.97 & 6.44 & 6.76 & 0.091\\       
1.1 & 3.57 & 3.82 & 4.14 & 4.45 & 0.106\\       
1.2 & 3.49 & 3.69 & 4.03 & 4.24 & 0.094\\       
1.3 & 3.27 & 3.45 & 3.73 & 3.95 & 0.091\\       
\hline\\
\end{tabular}
\caption{Empirical means of the maximal jump size. The exponents in the rightmost column are read from the slopes in log-log plot.}
\label{table:mean}
\end{table}

Tables~\ref{table:displacement} and~\ref{table:fluctuation} show the maximal displacement and the variance, respectively. The maximal displacement decreases in $\alpha$ while the variance increases. This is natural since smaller $\alpha$ allows the path to explore better environments in a wider area, and it should cause a stronger self-averaging. 
The so-called fluctuation exponent $\chi$ and the wondering exponent $\xi$ are computed, again assuming the power law behaviors. The results are not far from the Kardar--Parisi--Zhang prediction $2\chi=\xi=2/3$, at least when $\alpha>1$. 

It would be desirable to do a numerical experiment in a larger scale to make sharper predictions. \\

\begin{table}[h]
\centering 
\begin{tabular}{c c c c c c}
\hline                      
$\alpha$ & $n=4096$ & $n=8192$ & $n=16382$ & $n=32764$ & $\xi$\\
\hline                  
0.5 & 432 & 739 & 1200 & 1870 & 0.704\\
0.6 & 277 & 448 & 713 & 1110 & 0.666\\
0.7 & 216 & 350  & 553 & 869 & 0.669\\
0.8 & 193 & 303 & 479 & 767 & 0.664\\
0.9 & 179 & 280 & 447 & 703 & 0.658\\       
1.0 & 171 & 264 & 411 & 662 & 0.651\\       
1.1 & 169 & 260 & 408 & 634 & 0.646\\       
1.2 & 164 & 264 & 423 & 652 & 0.664\\       
1.3 & 166 & 265 & 410 & 662 & 0.665\\       
\hline\\
\end{tabular}
\caption{Empirical means of the maximal displacement. The wondering exponents $\xi$ are read from the slopes in log-log plot.}
\label{table:displacement}
\end{table}
\begin{table}[h]
\centering 
\begin{tabular}{c c c c c c}
\hline                      
$\alpha$ & $n=4096$ & $n=8192$ & $n=16382$ & $n=32764$ & $2\chi$\\
\hline                  
0.5 & 43.9 & 67.1 & 95.0 & 158 & 0.615\\
0.6 & 58 & 102 & 162 & 255 & 0.712\\
0.7 & 77 & 123  & 204 & 354 & 0.733\\
0.8 & 104 & 158 & 269 & 394 & 0.641\\
0.9 & 113 & 201 & 317 & 490 & 0.705\\       
1.0 & 154 & 252 & 374 & 641 & 0.685\\       
1.1 & 164 & 241 & 438 & 624 & 0.624\\       
1.2 & 183 & 267 & 459 & 721 & 0.659\\       
1.3 & 176 & 287 & 467 & 747 & 0.695\\       
\hline\\
\end{tabular}
\caption{The variance of the passage time. The fluctuation exponents $\chi$ are read from the slopes in log-log plot.}
\label{table:fluctuation}
\end{table}

\textbf{Acknowledgments.}
The author is indebted to Hugo Duminil-Copin for explaining the content of the paper~\cite{DCKNPS} prior to its publication. He also thanks Nobuo Yoshida for carefully reading an early version of this article. Part of this work has been done during the author's stay at ENS Paris in 2016 October. He thanks ENS for hospitality and Mitsubishi Heavy Industries for the financial support. This work was also supported by JSPS KAKENHI Grant Number JP24740055 and ISHIZUE 2019 of Kyoto University Research Development Program.

\newcommand{\noop}[1]{}\def\cprime{$'$}


\end{document}